\theoremstyle{thmstyleone}%
\newtheorem{theorem}{Theorem}[section]% meant for sectionwise numbers
\newtheorem{proposition}[theorem]{Proposition}%
\newtheorem{corollary}[theorem]{Corollary}%
\newtheorem{example}[theorem]{Example}%
\theoremstyle{thmstylethree}%
\newtheorem{definition}[theorem]{Definition}%
\begin{document}

\title[Solvability of The Operator Equations $ AX-XB=C $ and $ AX-YB=C $]{Solvability of The Operator Equations $ AX-XB=C $ and $ AX-YB=C $}

%%=============================================================%%
%% Prefix	-> \pfx{Dr}
%% GivenName	-> \fnm{Joergen W.}
%% Particle	-> \spfx{van der} -> surname prefix
%% FamilyName	-> \sur{Ploeg}
%% Suffix	-> \sfx{IV}
%% NatureName	-> \tanm{Poet Laureate} -> Title after name
%% Degrees	-> \dgr{MSc, PhD}
%% \author*[1,2]{\pfx{Dr} \fnm{Joergen W.} \spfx{van der} \sur{Ploeg} \sfx{IV} \tanm{Poet Laureate}
%%                 \dgr{MSc, PhD}}\email{iauthor@gmail.com}
%%=============================================================%%

\author*[1]{\fnm{Farida} \sur{Lombarkia}}\email{f.lombarkia@univ-batna2.dz}

\author[2]{\fnm{Assia} \sur{Bezai}}\email{as.bezai@univ-batna2.dz}

\author[3]{\fnm{N\'{e}stor} \sur{ Thome}}\email{njthome@mat.upv.es}
%\equalcont{These authors contributed equally to this work.}

%\author[2]{\fnm{Yuanyuan} \sur{Ke}}\email{keyy086@126.com}
%\equalcont{These authors contributed equally to this work.}

\affil[1]{\orgdiv{Department of Mathematics, Faculty of Mathematics and Informatics}, \orgname{ University of Batna2}, \orgaddress{\city{Batna}, \postcode{05078}, \country{Algeria}}}

\affil[2]{\orgdiv{Department of Mathematics, Faculty of Mathematics and Informatics}, \orgname{ University of Batna2}, \orgaddress{\city{Batna}, \postcode{05078}, \country{Algeria}}}

\affil[3]{\orgdiv{Instituto Universitario de Matem\'{a}tica Multidisciplinar}, \orgname{Universitat Polit\`{e}cnica de Val\`{e}ncia}, \orgaddress{\city{Valencia}, \postcode{ 46022}, \country{Spain}}}

%%==================================%%
%% sample for unstructured abstract %%
%%==================================%%

\abstract{This paper provides new necessary and sufficient conditions for the solvability to the operator equations $ AX-XB=C$ and $AX-YB=C,$ where $A $ and $B $ are group invertible operators defined on an infinite dimensional Hilbert spaces. In addition the general solutions to the equation $AX-YB=C,$ are derived in terms of group inverse of $ A $ and $ B $. As a consequence, new necessary and sufficient conditions for the solvability to the operator equation $ AYB-Y=C,$ are derived.}

%%================================%%

\keywords{Operator equation, Group inverse, Inner inverse, Pseudo-similarity, Pseudo-equivalence.}

%%\pacs[JEL Classification]{D8, H51}

\pacs[MSC Classification]{47A62; 15A09}

\maketitle
\section { \bf Introduction and basic definitions}

Let $\mathcal{H}$ and $\mathcal{K}$ be infinite dimensional Hilbert spaces and $\mathcal{B}(\mathcal{H},\mathcal{K})$ the Banach space of all bounded linear operators from $\mathcal{H}$ into $\mathcal{K}$.
If  $A \in \mathcal{B}(\mathcal{H})$, then  $\mathcal{R}(A),~ \mathcal{N}(A)$ represent the range and the null space of $A$, respectively.
\\
An operator $S\in\mathcal{B}(\mathcal{K},\mathcal{H})$ is said to be an inner inverse of $A\in\mathcal{B}(\mathcal{H},\mathcal{K})$ if it satisfies the equation \begin{center}
$ASA=A.$
\end{center} We denote an inner inverse  of $A$  by $A^{-}$. An operator $A$ is called regular, if $A^{-}$ exists. The operator $A\in\mathcal{B}(\mathcal{H},\mathcal{K})$ is regular if and only if $\mathcal{R}(A)$ is closed. For a nonzero $A\in\mathcal{B}(\mathcal{H})$, the group inverse of $ A $ is the unique (if it exists) element $ A^{\sharp}\in\mathcal{B}(\mathcal{H}) $, such that \begin{center}
$ AA^{\sharp}A=A, $~~~~~$ A^{\sharp}AA^{\sharp}=A^{\sharp}, $~~~~~$ AA^{\sharp}=A^{\sharp}A. $
\end{center}
Recall that the ascent (resp. descent) of $ A $ is the smallest non-negative integer $ n $ such that $ \mathcal{N}(A^{n})=\mathcal{N}(A^{n+1}) $ (resp. $ \mathcal{R}(A^{n})=\mathcal{R}(A^{n+1}) $ ). It is well known that the ascent of $ A $ coincide with the descent of $ A $ if ascent and descent of $ A $ are finite and this common value is known as the Drazin index of $ A $, denoted by $ ind(A) $. A nonzero operator $A\in\mathcal{B}(\mathcal{H})$ has group inverse $ A^{\sharp} $ if and only if $ A $ has finite ascent and descent such that $ ind(A)=ascent(A)=descent(A)\leq1. $ When $ ind(A)=0, $ the group inverse reduces the standard inverse, i.e., $ A^{\sharp}=A^{-1} $. If $ A $ is group invertible, then the spectral idempotent at zero is given by $ A^{\pi}=I-AA^{\sharp}.$

There are many papers in which the basic aim is to find necessary and sufficient conditions for the existence of a solution to some matrix or operator equations. The reason for this are diverse applications in physics, mechanics, control theory and many other fields.  
In 1952, W.E. Roth \cite{Roth} showed that the matrix equations of the form $ AX-YB=C $ and $ AX-XB=C $ over fields can be solved if and only if the block  matrices \begin{center}
$\begin{bmatrix}A & C \\0 & B\end{bmatrix}$ and $\begin{bmatrix}
A & 0 \\
0 & B
\end{bmatrix}$ \end{center}
are equivalent or similar, resectively. In 1969, Rosenblum  \cite{Rosen} showed that the result of Roth for the equation $ AX-XB=C $ remains true when $ A $ and $ B $ are selfadjoint operators on a complex separable Hilbert space. In 1982, A. Schweinsberg \cite{Schwei} extended the result to include finite rank operators and normal operators on a Hilbert space.\\ In 1979, J. K. Baksalary and R. Kala \cite{Bak} established a necessary and sufficient condition for matrix equation $ AX-YB=C $ to have a solution and gave an expression of its general solution, the condition established by J. K. Baksalary and R. Kala \cite{Bak} differs from the one given by  W.E. Roth \cite{Roth}.\\

 The purpose of this note is to give new necessary and sufficient conditions for the existence of solutions to the operator equations $ AX-YB=C $, $ AX-XB=C $ and $ AYB-Y=C $, using generalized inverses and the concept of pseudo-similarity and pseudo-equivalence of the matrices of operators \begin{center}
$M=\begin{bmatrix}A & C \\0 & B\end{bmatrix}$ and $D=\begin{bmatrix}
A & 0 \\
0 & B
\end{bmatrix}.$ \end{center}
The remainder of this paper is organized as follows. In Section 2, we generalize the notion of pseudo-similarity and pseudo-equivalence introduced by  R.E. Hartwig and F. J. Hall \cite{Ha} for matrices over a ring to the setting of bounded linear operators defined on infinite dimensional Hilbert spaces. In addition, we give some properties of these two relations which are weaker than similarity and equivalence known in the literature.
 \\ In section 3, we prove that if  $A \in\mathcal{B}(\mathcal{H})$, $B \in\mathcal{B}(\mathcal{K})$ and $C \in\mathcal{B}(\mathcal{K} ,\mathcal{H})$ such that $ A$ and $B$ are group invertible, then the equation $AX-XB=C$ has a solution if and only if $ M$ is pseudo-similar to $D$.
\\ In section 4, we provide
some equivalent conditions to the solvability of the operator equation $AX-YB=C$. The most important one is that the matrices of operators $M$ and $ D $ are pseudo-equivalent via some operators $ P$ and $ Q$ and the operator $ U=DQPDD^{\sharp}+I-DD^{\sharp} $ is invertible. As a consequence, we deduce necessary and sufficient conditions for the existence of  solution to the 
operator equation $AYB-Y=C$.

\section{Pseudo-similarity and pseudo-equivalence of operators}
In 1978, R.E. Hartwig and F. J. Hall \cite{Ha} introduced the notion of pseudo-similarity and pseudo-equivalence for matrices over a ring. In this section, we generalize these notions to the general setting of bounded linear operators on infinite dimensional Hilbert spaces, and then we prove some of their properties.

\begin{definition}\label{def1}
Let $A \in\mathcal{B}(\mathcal{H})$ and $B \in\mathcal{B}(\mathcal{K})$, we say that $A $ is pseudo-similar to $B $, via $ T $ and we write $A\overline{\sim} B(T,T^{-}, T^{=})$, if there exist a regular operator $T \in\mathcal{B}(\mathcal{K}, \mathcal{H})$  such that\\
\centerline{$A = TBT^{=}$ and  $ B = T^{-}AT$,}
where $T^{-}, T^{=}$ are two possibly different inner inverses of $ T$.
\end{definition}
It is easy to prove that similarity implies pseudo-similarity but in general pseudo-similarity does not imply similarity as seen from the following example.
\begin{example}
Let $S_{r}$ the right shift and  $S_{l}$ the left shift defined by \\
$$
S_{r} :l_{2}\mapsto l_{2},~S_{r}(x_{1},x_{2},x_{3},....)=(0,x_{1}, x_{2}, ......),
$$ and

$$S_{l} :l_{2}\mapsto l_{2},~~ S_{l}(x_{1},x_{2},x_{3},....)=(x_{2}, x_{3},......).$$ Let $A=S_{r}S_{l}$, and $B=I$ is the identity operator on $l_{2}$. Since $S_{l}$ is the inner inverse of $S_{r}$, we have $S_{l}AS_{r}=B$ and $S_{r}BS_{l}=A$, so that $A$ is pseudo-similar to $B$, and $B$ is not similar to $A$.
\end{example}
\begin{theorem}\label{Theo1}
Let $A \in\mathcal{B}(\mathcal{H})$ and $B \in\mathcal{B}(\mathcal{K})$, suppose that $A $ is pseudo-similar to $B $ via $T \in\mathcal{B}(\mathcal{K},\mathcal{H})$ and $T^{-}$, $T^{=}$ are as in Definition \ref{def1}. Then the following are valid
  $$A=TT^{-}ATT^{=}=ATT^{=}=TT^{-}A,$$ and $$B=T^{-}TBT^{=}T=T^{-}TB=BT^{=}T.$$
\end{theorem}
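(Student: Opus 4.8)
The plan is to derive every identity by direct substitution, using only the two defining relations of inner inverses, namely $TT^{-}T=T$ and $TT^{=}T=T$; no reflexivity ($T^{-}TT^{-}=T^{-}$) is available, so I will be careful to arrange the computations so that these are never needed.

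First, for the three identities on the first line, I would substitute the relation $B=T^{-}AT$ into $A=TBT^{=}$. This gives at once $A=T(T^{-}AT)T^{=}=TT^{-}ATT^{=}$, which is the leftmost equality. Next I would prove the one-sided statement $TT^{-}A=A$: starting from $A=TBT^{=}$ and inserting $TT^{-}T=T$, I get $TT^{-}A=TT^{-}(TBT^{=})=(TT^{-}T)BT^{=}=TBT^{=}=A$. Having $A=TT^{-}A$, I multiply on the right by $TT^{=}$ to obtain $ATT^{=}=TT^{-}ATT^{=}=A$, where the last step is the equality already established. This yields $A=ATT^{=}=TT^{-}A=TT^{-}ATT^{=}$, i.e. the whole first line.

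For the second line I would mirror this argument with the roles of the substitutions interchanged. Substituting $A=TBT^{=}$ into $B=T^{-}AT$ gives $B=T^{-}(TBT^{=})T=T^{-}TBT^{=}T$, the inner equality. Then I establish the one-sided statement $BT^{=}T=B$ from $B=T^{-}AT$ and $TT^{=}T=T$: indeed $BT^{=}T=T^{-}ATT^{=}T=T^{-}A(TT^{=}T)=T^{-}AT=B$. Finally, feeding $B=BT^{=}T$ back into $B=T^{-}TBT^{=}T$ collapses it to $B=T^{-}T(BT^{=}T)=T^{-}TB$, completing the line.

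The computations are all routine; the only point requiring attention is that an inner inverse satisfies merely $TXT=T$ and none of the reflexive or commuting identities enjoyed by a group inverse. Hence the main thing to watch is to never simplify a product such as $T^{=}TT^{=}$ or $T^{-}TT^{-}$, and instead always keep a factor of $T$ sandwiched between $T$ and its inner inverse so that $TT^{-}T=T$ (or $TT^{=}T=T$) can be applied. Organizing the substitutions in the order above guarantees this, so I expect no genuine obstacle beyond the bookkeeping.
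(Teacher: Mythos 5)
Your proof is correct: every identity follows by the direct substitutions you describe, and you correctly use only the inner-inverse relations $TT^{-}T=T$ and $TT^{=}T=T$ together with the two defining equations of pseudo-similarity. The paper omits the argument entirely (deferring to the finite-dimensional case of Hartwig and Hall), and your computation is precisely that standard verification, so it matches the intended proof.
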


\begin{proof}
The proof is similar to the finite-dimensional case.
\end{proof}

We deduce the following corollary.
\begin{corollary}\label{Coro1}
Let $A \in \mathcal{B}(\mathcal{H})$ , $B \in \mathcal{B}(\mathcal{K})$ and let  $T\in \mathcal{B}(\mathcal{K}, \mathcal{H})$ be a regular operator, then the following are  equivalent
\begin{enumerate}
\item[i.] $A\overline{\sim} B$ via $(T,T^{-},T^{=})$,
\item[ii.] $AT=TB$, $A=ATT^{=}$ and $B=T^{-}TB$,
\item[iii.] $BT^{=}=T^{-}A$, $TT^{-}A=A$ and $BT^{=}T=B$.
\end{enumerate}
\end{corollary}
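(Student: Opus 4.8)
The plan is to establish the three equivalences by a cyclic chain of implications $(i) \Rightarrow (ii) \Rightarrow (iii) \Rightarrow (i)$, throughout exploiting the inner-inverse identities $TT^-T = T$ and $TT^=T = T$ together with the ``absorption'' relations supplied by Theorem \ref{Theo1}. The guiding observation is that Theorem \ref{Theo1} does much of the work for free: once pseudo-similarity $(i)$ is known, it immediately furnishes $A = ATT^=$, $A = TT^-A$, $B = T^-TB$ and $B = BT^=T$, so that in each of $(ii)$ and $(iii)$ two of the three displayed identities are already at hand and only the single ``intertwining'' relation remains to be produced.

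For $(i) \Rightarrow (ii)$ I would read off $A = ATT^=$ and $B = T^-TB$ directly from Theorem \ref{Theo1}, and then derive the intertwining relation $AT = TB$ by the computation $TB = T(T^-AT) = (TT^-A)T = AT$, where the first equality is the defining relation $B = T^-AT$ of Definition \ref{def1} and the last uses $TT^-A = A$ from Theorem \ref{Theo1}.

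For $(ii) \Rightarrow (iii)$ I would first recover $B = T^-AT$ by multiplying $AT = TB$ on the left by $T^-$ and invoking $B = T^-TB$. Then $BT^= = T^-A$ follows from $BT^= = (T^-AT)T^= = T^-(ATT^=) = T^-A$ via $ATT^= = A$, and $BT^=T = B$ follows from $BT^=T = T^-A(TT^=T) = T^-AT = B$ using $TT^=T = T$. The one step demanding genuine care is the middle identity $TT^-A = A$ of $(iii)$: I would obtain it from $TT^-A = TT^-(ATT^=) = (TT^-AT)T^=$ together with $TT^-AT = TT^-(TB) = (TT^-T)B = TB = AT$, whence $(TT^-AT)T^= = ATT^= = A$. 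This bookkeeping with nested inner inverses is the place where it is easiest to misplace a factor, and it is the main obstacle in the whole argument.

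Finally, $(iii) \Rightarrow (i)$ is a short direct verification of Definition \ref{def1}: one has $TBT^= = T(T^-A) = TT^-A = A$ and $T^-AT = (BT^=)T = BT^=T = B$, using precisely the three relations of $(iii)$. This closes the cycle and establishes the equivalence of $(i)$, $(ii)$ and $(iii)$.
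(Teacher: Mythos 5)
Your proof is correct: each step in the cycle $(i)\Rightarrow(ii)\Rightarrow(iii)\Rightarrow(i)$ checks out, including the delicate verification $TT^{-}A = TT^{-}ATT^{=} = (TT^{-}AT)T^{=} = ATT^{=} = A$ in the middle implication. The paper gives no explicit proof (it simply states that the corollary is ``deduced'' from Theorem \ref{Theo1}), and your argument is precisely the natural deduction the paper intends, using Theorem \ref{Theo1} for $(i)\Rightarrow(ii)$ and direct inner-inverse manipulations for the rest.
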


Now we give the definition of pseudo-equivalence for bounded operators defined on Hilbert spaces.
\begin{definition}
Let $A \in\mathcal{B}(\mathcal{H})$ and $B \in\mathcal{B}(\mathcal{K})$. We say that $A$ is pseudo-equivalent to $B$,
if there exist two regular operators $P\in\mathcal{B}(\mathcal{H},\mathcal{K})$ and $Q \in\mathcal{B}(\mathcal{K},\mathcal{H})$  such that\\
\centerline{
$B=PAQ  $ and $A=P^{-}BQ^{-},$}
where $Q^{-}, P^{-}$ are inner inverses of $Q$ and $P,$ respectively. In this case, $A$ and $B$ are pseudo-equivalent via $ P $ and $ Q$. 
\end{definition}

\begin{proposition}\label{Prop}
Let $A \in\mathcal{B}(\mathcal{H})$ and $B \in\mathcal{B}(\mathcal{K})$. If $A$ is pseudo-equivalent to $B$ via $ P $ and $ Q, $ then we have 
\begin{enumerate}
\item[i.]$ P^{-}PA=A$ and $ AQQ^{-}=A,$ 
\item[ii.] $ PP^{-}B=B$ and $ BQ^{-}Q=B.$
\end{enumerate}

\end{proposition}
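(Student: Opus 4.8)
The plan is to play the two defining relations of pseudo-equivalence, $B=PAQ$ and $A=P^{-}BQ^{-}$, against each other: substituting one into the other produces a single two-sided identity for $A$ (and, symmetrically, one for $B$), after which the four claimed equalities follow by exploiting that a product of an operator with one of its inner inverses is idempotent.

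For part i, I would substitute $B=PAQ$ into $A=P^{-}BQ^{-}$ to obtain
\[
A=P^{-}(PAQ)Q^{-}=(P^{-}P)\,A\,(QQ^{-}).
\]
Writing $E=P^{-}P$ and $F=QQ^{-}$, the inner-inverse equations $PP^{-}P=P$ and $QQ^{-}Q=Q$ give $E^{2}=P^{-}(PP^{-}P)=P^{-}P=E$ and $F^{2}=(QQ^{-}Q)Q^{-}=QQ^{-}=F$, so both $E$ and $F$ are idempotent and $A=EAF$. Left-multiplying this identity by $E$ and using $E^{2}=E$ yields $EA=E(EAF)=EAF=A$, that is $P^{-}PA=A$; right-multiplying by $F$ and using $F^{2}=F$ yields $AF=(EAF)F=EAF=A$, that is $AQQ^{-}=A$.

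Part ii is entirely symmetric: substituting $A=P^{-}BQ^{-}$ into $B=PAQ$ gives
\[
B=P(P^{-}BQ^{-})Q=(PP^{-})\,B\,(Q^{-}Q),
\]
and the same inner-inverse identities show that $G=PP^{-}$ and $H=Q^{-}Q$ are idempotent with $B=GBH$; multiplying on the appropriate side then delivers $PP^{-}B=B$ and $BQ^{-}Q=B$.

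The only point requiring care --- and the step I expect to be the genuine obstacle rather than mere bookkeeping --- is resisting the temptation to cancel $P^{-}P$ against $A$ directly. Since $P^{-}$ is assumed only to be an inner inverse, $P^{-}PP^{-}$ need not equal $P^{-}$, so no one-sided cancellation is available; the detour through the two-sided identity $A=EAF$ together with the idempotency of $E$ and $F$ is precisely what legitimizes the one-sided conclusions.
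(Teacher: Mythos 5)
Your proof is correct and follows essentially the same route as the paper's: both substitute the defining relations $B=PAQ$ and $A=P^{-}BQ^{-}$ into each other and reduce via the inner-inverse identities $PP^{-}P=P$ and $QQ^{-}Q=Q$. Your packaging through the idempotents $E=P^{-}P$, $F=QQ^{-}$ and the two-sided identity $A=EAF$ is a slightly cleaner organization of the same computation the paper carries out by direct chained substitution.
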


\begin{proof}

\begin{enumerate}
\item[i.]$A$ is pseudo-equivalent to $B$, then there exists two regular operators $P\in\mathcal{B}(\mathcal{H},\mathcal{K})$ and $Q \in\mathcal{B}(\mathcal{K},\mathcal{H})$  such that
\\
\centerline{$B=PAQ$ and $ A = P^{-}BQ^{-}$.}
From the equality $ A =P^{-}BQ^{-},$ we obtain

 \begin{equation*} 
\begin{split}
P^{-}PA& =P^{-}PP^{-}BQ^{-} \\
&=P^{-}PP^{-}(PAQ )Q^{-} \\ 
& =P^{-}(PP^{-}P)AQQ^{-}\\
& =P^{-}(PAQ)Q^{-}\\
& =P^{-}BQ^{-}\\
& =A,
\end{split}
\end{equation*}

and

 \begin{equation*} 
\begin{split}
AQQ^{-}& =P^{-}BQ^{-}QQ^{-} \\
&=P^{-}PA(QQ^{-}Q)Q^{-} \\ 
& =P^{-}(PAQ)Q^{-}\\
& =P^{-}BQ^{-}\\
& =A.
\end{split}
\end{equation*}

\item[ii.] Similarly as the proof of $ (i) $, we have $ PP^{-}B=B$ and $ BQ^{-}Q=B.$
\end{enumerate}
\end{proof}
\section{Solvability of the operator equation $AX-XB=C$}

In the following theorem, we give necessary and sufficient conditions for the existence of the
solutions to the operator equation $AX-XB=C,$ by using the pseudo-similarity of the operator matrices $M=\begin{bmatrix}A & C \\0 & B\end{bmatrix}$ and $ D=\begin{bmatrix}
A & 0 \\
0 & B
\end{bmatrix}.$

\begin{theorem}\label{th3}
Let $A \in\mathcal{B}(\mathcal{H})$, $B \in\mathcal{B}(\mathcal{K})$ and $C \in\mathcal{B}(\mathcal{K} ,\mathcal{H})$ such that $ A$, $B$ and $ M $ are group invertible. Then the following are equivalent.
\begin{enumerate}
\item[i.] There exist $X\in\mathcal{B}(\mathcal{K},\mathcal{H})$ solution to the equation 
\begin{equation}\label{equ1}
	AX-XB=C.
	\end{equation}

\item[ii.] There exist $P\in\mathcal{B}(\mathcal{H}\oplus \mathcal{K})$ regular, such that $ M$ is pseudo-similar to $D$ via $P$.
\end{enumerate}
\end{theorem}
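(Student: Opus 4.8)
The plan is to prove the two implications separately, with essentially all of the work concentrated in (ii) $\Rightarrow$ (i). For (i) $\Rightarrow$ (ii), suppose $X$ solves $AX-XB=C$ and set
$$P=\begin{bmatrix} I & -X\\ 0 & I\end{bmatrix}\in\mathcal B(\mathcal H\oplus\mathcal K),$$
which is invertible with $P^{-1}=\begin{bmatrix} I & X\\ 0 & I\end{bmatrix}$. A direct block computation gives $PDP^{-1}=M$ and $P^{-1}MP=D$. Since an invertible operator is regular and its inverse is simultaneously an inner inverse, $P^{-1}$ serves as both $P^{-}$ and $P^{=}$, so $M\,\overline{\sim}\,D$ via $P$; this is exactly the observation recorded after Definition \ref{def1} that similarity implies pseudo-similarity. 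Note that here $M=PDP^{-1}$ is automatically group invertible, so the standing hypothesis on $M$ is really needed only for the converse.

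For (ii) $\Rightarrow$ (i), I would start from $M\,\overline{\sim}\,D$ via $(P,P^{-},P^{=})$ and extract the working relations supplied by Section 2: by Corollary \ref{Coro1} one has $MP=PD$ together with $M=MPP^{=}$ and $D=P^{-}PD$, and by Theorem \ref{Theo1} also $M=PP^{-}M$ and $D=DP^{=}P$. Writing $P=[P_{ij}]$, $P^{-}=[Q_{ij}]$ and $P^{=}=[R_{ij}]$ as $2\times2$ operator matrices over $\mathcal H\oplus\mathcal K$, I would expand $MP=PD$ into its four blocks,
$$AP_{11}+CP_{21}=P_{11}A,\quad AP_{12}+CP_{22}=P_{12}B,\quad BP_{21}=P_{21}A,\quad BP_{22}=P_{22}B,$$
and record the analogous block identities coming from $M=MPP^{=}$ and $D=P^{-}PD$. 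These intertwine $A$ and $B$ with the blocks of $P$ and already contain $C$ in the two top equations.

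The heart of the argument is then to manufacture an explicit solution $X$ from this data. Because $A$, $B$ and $M$ are group invertible, I would bring in the idempotents $AA^{\sharp}$, $BB^{\sharp}$, $MM^{\sharp}$ and the reductions $A=AA^{\sharp}A$, $A^{\sharp}AA^{\sharp}=A^{\sharp}$ (together with their $B$-counterparts) and use them to \emph{pseudo-invert} $P$ on the appropriate ranges, the step that in the classical Roth theorem is handled by genuine invertibility of the intertwiner. Concretely, I would define $X$ as a combination of the off-diagonal blocks of $P,P^{-},P^{=}$ weighted by $A^{\sharp}$ and $B^{\sharp}$, the natural first guess being read off from the top-row block equations (a leading term of the form $A^{\sharp}C$ corrected by off-diagonal blocks of $P$), and then verify $AX-XB=C$ by substituting and collapsing the resulting products with the group-inverse identities.

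I expect the main obstacle to be precisely this last verification. Since $P$ is only regular, one cannot conjugate as in the invertible case, and the block manipulations leave behind idempotent factors such as $AA^{\sharp}$, $BB^{\sharp}$ and $MM^{\sharp}$. Showing that these collapse so that the \emph{full} operator $C$, and not merely a compression like $AA^{\sharp}C\,BB^{\sharp}$, is recovered is where the group invertibility of $M$, beyond that of $A$ and $B$, must enter in an essential way, and this is the delicate point on which the whole implication turns.
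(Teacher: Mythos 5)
Your (i)~$\Rightarrow$~(ii) direction is fine and in fact simpler than the paper's: the paper builds a merely regular intertwiner $P=\begin{bmatrix} AA^{\sharp} & -XBB^{\sharp}\\ 0 & BB^{\sharp}\end{bmatrix}$ together with two distinct inner inverses $P^{-}$, $P^{=}$, whereas your invertible $\begin{bmatrix} I & -X\\ 0 & I\end{bmatrix}$ does the job because similarity implies pseudo-similarity. The problem is the converse, which you have outlined but not proved. You correctly extract the block relations from $MP=PD$ and you correctly predict that the difficulty is to recover all of $C$ rather than a compression such as $AA^{\sharp}CBB^{\sharp}$ --- but you then stop exactly there, without ever writing down a candidate $X$ or carrying out the verification. ``Define $X$ as a combination of the off-diagonal blocks weighted by $A^{\sharp}$ and $B^{\sharp}$ and then verify'' is a statement of intent, not an argument, and the point you yourself flag as ``delicate'' is precisely the entire content of the implication.

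Two concrete ingredients of the paper's proof are absent from your sketch and are what make the verification go through. First, the group invertibility of $M$ yields $A^{\pi}CB^{\pi}=0$ (by \cite[Theorem 2.5]{Den6}), hence $C=CBB^{\sharp}+AA^{\sharp}CB^{\pi}$; this identity is exactly what converts the compressed relation $-CBB^{\sharp}=ARBB^{\sharp}-RB$ (obtained from the top-right block equation $CT=-AR+RB$ after multiplying by $BB^{\sharp}$) into a statement about the full operator $C$. Second, the block equations from $MP=PD$ alone do not determine the blocks of $P$ well enough to read off $X$: the paper additionally parametrizes $V=P^{-}P$ and $W=PP^{=}$ as the general solutions of $VD=D$ and $MW=M$ (using $D^{\sharp}$ and $M^{\sharp}$), and then solves the system $P(I-V)=0$, $(I-W)P=0$ to write $P=WZV$, which pins down $R$ and $T$ up to arbitrary parameters and, crucially, gives $CTBB^{\sharp}=CBB^{\sharp}$. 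Only after this does an explicit solution $X$ emerge and get checked against $AX-XB=C$. Without these steps your proposal does not close the implication (ii)~$\Rightarrow$~(i).
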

\begin{proof}

$ (i)\Rightarrow(ii) $
Suppose that $X$ is a solution of the equation (\ref{equ1}) and let $P= \begin{bmatrix}
AA^{\sharp} & -XBB^{\sharp} \\
0 & BB^{\sharp}
\end{bmatrix}$.
 Observe that $AA^{\sharp}$ and $BB^{\sharp}$ are group invertible and that $ (AA^{\sharp})^{\pi}=A^{\pi} $ and $( BB^{\sharp})^{\pi}=B^{\pi} $. Since $A^{\pi}(-XBB^{\sharp})B^{\pi}=0$, then from \cite[Theorem 2.5]{Den6} it follows that $P$ is group invertible. Hence, $P$ is regular and\\
$$P^{-}= \begin{bmatrix}
AA^{\sharp} & AA^{\sharp}XBB^{\sharp} \\
0 & BB^{\sharp}
\end{bmatrix}$$ is an inner inverse of $P$. From \cite[Theorem 2]{Cara}, the set of inner inverses of $ P $ is given by
$\lbrace P^{-} +U- P^{-}PUPP^{-},~\text{for arbitrary U}\rbrace$. If we choose $U= \begin{bmatrix}
0 & AA^{\sharp}X \\
0 & 0
\end{bmatrix}$, we get that
\begin{center}
$P^{=}= \begin{bmatrix}
AA^{\sharp} & AA^{\sharp}X\\
0 & BB^{\sharp}
\end{bmatrix}$
\end{center}
 is another inner inverse of $P$ and we have
\begin{equation*}
\begin{split}
PDP^{=}& = \begin{bmatrix}
AA^{\sharp} & -XBB^{\sharp} \\
0 & BB^{\sharp}
\end{bmatrix}\begin{bmatrix}
A & 0 \\
0 & B
\end{bmatrix}\begin{bmatrix}
AA^{\sharp} & AA^{\sharp}X \\
0 & BB^{\sharp}
\end{bmatrix}\\
&= \begin{bmatrix}
A & AX-XB \\
0 & B
\end{bmatrix}\\
&=\begin{bmatrix}
A & C \\
0 & B
\end{bmatrix}\\
&= M,
\end{split}
\end{equation*}
 and
 \begin{equation*}
\begin{split}
P^{-}MP&= \begin{bmatrix}
AA^{\sharp} & AA^{\sharp}XBB^{\sharp} \\
0 & BB^{\sharp}
\end{bmatrix}\begin{bmatrix}
A & C \\
0 & B
\end{bmatrix}\begin{bmatrix}
AA^{\sharp} & -XBB^{\sharp} \\
0 & BB^{\sharp}
\end{bmatrix}\\
&= \begin{bmatrix}
A & AA^{\sharp}C +AA^{\sharp}XB \\
0 & B
\end{bmatrix}\begin{bmatrix}
AA^{\sharp} & -XBB^{\sharp} \\
0 & BB^{\sharp}
\end{bmatrix}\\
&=\begin{bmatrix}
A & -AXBB^{\sharp}+AA^{\sharp}CBB^{\sharp}+AA^{\sharp}XB \\
0 & B
\end{bmatrix}\\
&=\begin{bmatrix}
A & -AXBB^{\sharp}+AA^{\sharp}(AX-XB)BB^{\sharp}+AA^{\sharp}XB \\
0 & B
\end{bmatrix}\\
&=\begin{bmatrix}
A & 0 \\
0 & B
\end{bmatrix}\\
&= D.
\end{split}
\end{equation*}
Then $M$ is pseudo-similar to $D$, via $P$.\\
$ (ii)\Rightarrow(i) $\\
 Suppose that there exists  $P= \begin{bmatrix}
Q & R \\
S & T
\end{bmatrix} \in \mathcal{B}(\mathcal{H}\oplus\mathcal{K})$ regular
such that $M$ is pseudo-similar to $D$, via $P$. It follows from  Corollary \ref{Coro1} that
\begin{center}
$MP=PD$, $MPP^{=}=M$ and $P^{-}PD=D.$ 
\end{center}

Let $V=P^{-}P$, then we have the equation $VD=D$ is solvable and the solution is given by 
\begin{equation}
\begin{split}
V &=DD^{\sharp}+U(I-DD^{\sharp})\\
  &= \begin{bmatrix}
AA^{\sharp}+U_{1}A^{\pi} & U_{2}B^{\pi}\\
U_{3}A^{\pi} & BB^{\sharp}+U_{4}B^{\pi}
 \end{bmatrix},
\end{split}
\end{equation}
where $U = \begin{bmatrix}
U_{1} & U_{2}\\
U_{3} & U_{4}
 \end{bmatrix}$ is arbitrary.\\
 
Let $W=PP^{=}$, since $ M $ is group invertible, then the equation $MW=M$ is solvable and the solution is given by
\begin{equation*}
\begin{split}
W &=M^{\sharp}M+(I-M^{\sharp}M)Y\\
  &=
  \begin{bmatrix}
AA^{\sharp}+A^{\pi}Y_{1}-[\mathcal{S}B+A^{\sharp}C]Y_{3} &
[\mathcal{S}B+A^{\sharp}C]+A^{\pi}Y_{2}-[\mathcal{S}B+A^{\sharp}C]Y_{4}\\
B^{\pi}Y_{3}&
BB^{\sharp}+B^{\pi}Y_{4}
 \end{bmatrix}\\
 &= \begin{bmatrix}
AA^{\sharp}+A^{\pi}Y_{1}-(A^{\pi}CB^{\sharp}+A^{\sharp}CB^{\pi})Y_{3} &
A^{\pi}CB^{\sharp}+A^{\sharp}CB^{\pi}+A^{\pi}Y_{2}-(A^{\pi}CB^{\sharp}+A^{\sharp}CB^{\pi})Y_{4}\\
B^{\pi}Y_{3}&
BB^{\sharp}+B^{\pi}Y_{4}
 \end{bmatrix},
\end{split}
\end{equation*}
where $Y = \begin{bmatrix}
Y_{1} & Y_{2}\\
Y_{3} & Y_{4}
 \end{bmatrix}  $ is arbitrary and $\mathcal{S} = (A^{\sharp})^{2}CB^{\pi}+A^{\pi}C(B^{\sharp})^{2}- A^{\sharp}CB^{\sharp} $.

The system of equation 
\begin{equation*}
\left \{
\begin{array}{rcl}
PV &=& P\\
WP &=& P
\end{array}
\right. 
\end{equation*}
is equivalent to the following one \begin{equation}\label{equa}
\left \{
\begin{array}{rcl}
P(I-V) &=& 0\\
(I-W)P &=& 0
\end{array}
\right. 
\end{equation}
where $(I-V)$ and $(I-W)$ are projections, it follows from \cite[ Theorem 4.5] {Daj} that the solution of (\ref{equa}) is $ P=WZV, $ where $Z = \begin{bmatrix}
Z_{1} & Z_{2}\\
Z_{3} & Z_{4}
 \end{bmatrix}  $ is arbitrary. Hence

\begin{equation}\label{qu1}
\left \{
\begin{array}{rcl}
Q &=& Q_{1}(AA^{\sharp}+U_{1}A^{\pi})+Q_{2}U_{3}A^{\pi}\\
R &=& Q_{1}(U_{2}B^{\pi})+Q_{2}( BB^{\sharp}+U_{4}B^{\pi})\\
S &=& S_{1}(AA^{\sharp}+U_{1}A^{\pi})+S_{2}(U_{3}A^{\pi}) \\
T &=& S_{1}(U_{2}B^{\pi})+S_{2}( BB^{\sharp}+U_{4}B^{\pi})
\end{array}
\right., 
\end{equation}
where 
\begin{equation}\label{qu2}
\left \{
\begin{array}{rcl}
Q_{1} &=& AA^{\sharp}Z_{1}+A^{\pi}Y_{1}Z_{1}-(A^{\sharp}CB^{\pi}+A^{\pi}CB^{\sharp})Y_{3}Z_{1}+(A^{\sharp}CB^{\pi}+A^{\pi}CB^{\sharp})Z_{3}+A^{\pi}Y_{2}Z_{3}\\
&-&(A^{\sharp}CB^{\pi}+A^{\pi}CB^{\sharp})Y_{4}Z_{3}\\
Q_{2} &=& AA^{\sharp}Z_{2}+A^{\pi}Y_{1}Z_{2}-(A^{\sharp}CB^{\pi}+A^{\pi}CB^{\sharp})Y_{3}Z_{2}+(A^{\sharp}CB^{\pi}+A^{\pi}CB^{\sharp})Z_{4}+A^{\pi}Y_{2}Z_{4}\\
&-&(A^{\sharp}CB^{\pi}+A^{\pi}CB^{\sharp})Y_{4}Z_{4}\\

S_{1} &=& B^{\pi}Y_{3}Z_{4}+BB^{\sharp}Z_{3}+B^{\pi}Y_{3}Z_{3} \\
S_{2} &=& B^{\pi}Y_{3}Z_{2}+BB^{\sharp}Z_{4}+B^{\pi}Y_{4}Z_{4}
\end{array}
\right., 
\end{equation}
The equation $MP=PD$ implies that $$\begin{bmatrix}
AQ+CS & AR+ CT \\
BS & BT
\end{bmatrix}=\begin{bmatrix}QA& RB \\
SA & TB
\end{bmatrix},$$ then
\begin{equation*}
\left \{
\begin{array}{rcl}
CS &=& -AQ+QA,\\CT &=& -AR+RB, \\BS&=&SA,\\BT&=&TB.
\end{array}
\right.
\end{equation*}

We have from \cite[Corollary 2.15]{Den6} that the equation $BT=TB$ implies $B^{\sharp}T=TB^{\sharp}$, consequently $BTB^{\sharp}=B^{\sharp}TB$, then from the formula of $ T $ in (\ref{qu1}), we obtain 
$ B^{\sharp}Z_{4}B=BZ_{4}B^{\sharp},$ then $ Z_{4}=BB^{\sharp}$ is a solution of the equation $ B^{\sharp}Z_{4}B=BZ_{4}B^{\sharp}.$
 We have $ CT = -AR+RB,$ it follows that $ -CTBB^{\sharp}=ARBB^{\sharp}-RB. $ Thus
\begin{equation} \label{qq1}
  -CBB^{\sharp}=ARBB^{\sharp}-RB,
 \end{equation} 
   it follows from (\ref{qu1}) and (\ref{qu2}), that
\begin{equation*}
\begin{array}{rcl}
-CBB^{\sharp} &=& \left[ AZ_{2}BB^{\sharp}-AA^{\sharp}CB^{\pi}Y_{3}Z_{2}BB^{\sharp}+AA^{\sharp}CB^{\pi}Z_{4}BB^{\sharp}-AA^{\sharp}CB^{\pi}Y_{4}Z_{4}BB^{\sharp} \right] \\
&-&\left[ AA^{\sharp}Z_{2} B+A^{\pi}Y_{1}Z_{2}B-(A^{\sharp}CB^{\pi}+A^{\pi}CB^{\sharp})Y_{3}Z_{2}B+(A^{\sharp}CB^{\pi}+A^{\pi}CB^{\sharp})Z_{4}B \right. \\
&+&\left.  A^{\pi}Y_{2}Z_{4}B-(A^{\sharp}CB^{\pi}+A^{\pi}CB^{\sharp})Y_{4}Z_{4}B \right],
\end{array}
\end{equation*}

so \begin{equation*}
\begin{array}{rcl}
-CBB^{\sharp} &=& \left[ AZ_{2}BB^{\sharp}-AA^{\sharp}CB^{\pi}Y_{3}Z_{2}BB^{\sharp}-AA^{\sharp}CB^{\pi}Y_{4}BB^{\sharp} \right] \\
&-&\left[ AA^{\sharp}Z_{2} B+A^{\pi}Y_{1}Z_{2}B-(A^{\sharp}CB^{\pi}+A^{\pi}CB^{\sharp})Y_{3}Z_{2}B+A^{\pi}CB^{\sharp}B\right.  \\
&+&\left.  A^{\pi}Y_{2}B-(A^{\sharp}CB^{\pi}+A^{\pi}CB^{\sharp})Y_{4}B \right]. 
\end{array}
\end{equation*}
Hence 

 \begin{equation}\label{eq1}
\begin{array}{rcl}
-CBB^{\sharp} &=& A\left[ AA^{\sharp}Z_{2}BB^{\sharp}-A^{\sharp}CB^{\pi}Y_{3}Z_{2}BB^{\sharp}-A^{\sharp}CB^{\pi}Y_{4}BB^{\sharp}+A^{\pi}Y_{1}Z_{2}BB^{\sharp}-A^{\pi}CB^{\sharp}Y_{3}Z_{2}BB^{\sharp} \right.  \\
 &+&\left.  A^{\pi}CB^{\sharp}+A^{\pi}Y_{2}BB^{\sharp}-A^{\pi}CB^{\sharp}Y_{4}BB^{\sharp}\right]\\
 &-&\left[ AA^{\sharp}Z_{2}BB^{\sharp}-A^{\sharp}CB^{\pi}Y_{3}Z_{2}BB^{\sharp}-A^{\sharp}CB^{\pi}Y_{4}BB^{\sharp}+A^{\pi}Y_{1}Z_{2}BB^{\sharp}-A^{\pi}CB^{\sharp}Y_{3}Z_{2}BB^{\sharp} \right.  \\
&+&\left.  A^{\pi}CB^{\sharp}+A^{\pi}Y_{2}BB^{\sharp}-A^{\pi}CB^{\sharp}Y_{4}BB^{\sharp} \right]B. 
\end{array}
\end{equation}
Since $ M $ is group invertible, it follows from \cite[Theorem 2.5]{Den6} that $ A^{\pi}CB^{\pi}=0, $ then $ CB^{\pi}=AA^{\sharp}CB^{\pi}$ on the other hand we have $ C=CBB^{\sharp}+CB^{\pi},$ consequently

 \begin{equation}\label{eq2}
  C=CBB^{\sharp}+AA^{\sharp}CB^{\pi},
 \end{equation}
 
 Thus from (\ref{eq1}) and (\ref{eq2}) we obtain 
 \begin{equation*}
\begin{array}{rcl}
C &=& CBB^{\sharp}+AA^{\sharp}CB^{\pi}\\
&=&A\left[ -AA^{\sharp}Z_{2}BB^{\sharp}+A^{\sharp}CB^{\pi}Y_{3}Z_{2}BB^{\sharp}+A^{\sharp}CB^{\pi}Y_{4}BB^{\sharp}-A^{\pi}Y_{1}Z_{2}BB^{\sharp}+A^{\pi}CB^{\sharp}Y_{3}Z_{2}BB^{\sharp} \right.  \\
 &-&\left.  A^{\pi}CB^{\sharp}-A^{\pi}Y_{2}BB^{\sharp}+A^{\pi}CB^{\sharp}Y_{4}BB^{\sharp}+A^{\sharp}CB^{\pi}\right]\\
 &-&\left[ -AA^{\sharp}Z_{2}BB^{\sharp}+A^{\sharp}CB^{\pi}Y_{3}Z_{2}BB^{\sharp}+A^{\sharp}CB^{\pi}Y_{4}BB^{\sharp}-A^{\pi}Y_{1}Z_{2}BB^{\sharp}+A^{\pi}CB^{\sharp}Y_{3}Z_{2}BB^{\sharp} \right.  \\
&-&\left.  A^{\pi}CB^{\sharp}-A^{\pi}Y_{2}BB^{\sharp}+A^{\pi}CB^{\sharp}Y_{4}BB^{\sharp}+A^{\sharp}CB^{\pi} \right]B 
\end{array}
\end{equation*}

Finally we deduce that 
\begin{equation}\label{sol}
\begin{array}{rcl}
X &=& -AA^{\sharp}Z_{2}BB^{\sharp}+A^{\sharp}CB^{\pi}Y_{3}Z_{2}BB^{\sharp}+A^{\sharp}CB^{\pi}Y_{4}BB^{\sharp}-A^{\pi}Y_{1}Z_{2}BB^{\sharp}+A^{\pi}CB^{\sharp}Y_{3}Z_{2}BB^{\sharp}   \\
 &-&  A^{\pi}CB^{\sharp}-A^{\pi}Y_{2}BB^{\sharp}+A^{\pi}CB^{\sharp}Y_{4}BB^{\sharp}+A^{\sharp}CB^{\pi}
\end{array}
\end{equation} 
is the solution of the equation $C=AX-XB,$ where $ Y_{1}, Y_{2}, Y_{3}, Y_{4}, Z_{2} $  are arbitrary operators.\\
 Before checking that $ X $ given in (\ref{sol}) is a solution of the equation $ AX-XB=C $, we have the following equalities. First from $ BT=TB,$ we have $$ BZ_{3}U_{2}B^{\pi}+BU_{4}B^{\pi}=B^{\pi}Y_{3}Z_{2}B+B^{\pi}Y_{4}B, $$ consequently 
\begin{equation}\label{req1}
AA^{\sharp}CB^{\pi}Y_{3}Z_{2}BB^{\sharp}+AA^{\sharp}CB^{\pi}Y_{4}BB^{\sharp}=0.
\end{equation}
From (\ref{qu1}) and (\ref{req1}), we have
\begin{equation}\label{req2}
\begin{array}{rcl}
AZ_{2}BB^{\sharp} &=& ARBB^{\sharp}-\left[AA^{\sharp}CB^{\pi}Y_{3}Z_{2}BB^{\sharp}+AA^{\sharp}CB^{\pi}Y_{4}BB^{\sharp} \right]    \\
 &=&  ARBB^{\sharp}.
\end{array}
\end{equation} 
Now we check that $ X $ given in (\ref{sol}) is a solution of the equation $ AX-XB=C.$
\begin{equation*}
\begin{array}{rcl}
AX-XB &=& -AZ_{2}BB^{\sharp}+AA^{\sharp}CB^{\pi}Y_{3}Z_{2}BB^{\sharp}+AA^{\sharp}CB^{\pi}Y_{4}BB^{\sharp}+AA^{\sharp}CB^{\pi} \\
&+& AA^{\sharp}Z_{2}B-A^{\sharp}CB^{\pi}Y_{3}Z_{2}B-A^{\sharp}CB^{\pi}Y_{4}B+A^{\pi}Y_{1}Z_{2}B-A^{\pi}CB^{\sharp}Y_{3}Z_{2}B  \\
&+& A^{\pi}CB^{\sharp}B+A^{\pi}Y_{2}B-A^{\pi}CB^{\sharp}Y_{4}B
\end{array}
\end{equation*}
From (\ref{req1}), (\ref{req2}), (\ref{qu1}) and (\ref{qu2}), we obtain 
\begin{equation*}
\begin{array}{rcl}
AX-XB &=& -ARBB^{\sharp}+AA^{\sharp}CB^{\pi}+\left[ AA^{\sharp}Z_{2}B+A^{\pi}Y_{1}Z_{2}B-(A^{\sharp}CB^{\pi}+A^{\pi}CB^{\sharp})Y_{3}Z_{2}B\right. \\
&+&\left.  (A^{\sharp}CB^{\pi}+A^{\pi}CB^{\sharp})B+A^{\pi}Y_{2}B-(A^{\sharp}CB^{\pi}+A^{\pi}CB^{\sharp})Y_{4}B \right]\\
&=& -ARBB^{\sharp}+C-CBB^{\sharp}+RB
\end{array}
\end{equation*}
Thus from (\ref{qq1}), we have $ AX-XB=CBB^{\sharp}+C-CBB^{\sharp}=C. $
\end{proof} 

Now we cite some of class of operators which are group invertible.
In \cite{Djor} Djordjevi\'{c} et all. introduced the class of polynomially normal operator on a complex Hilbert space, extending the notion of $n$-normal and normal operators. Recall that $A$ is polynomially normal if there exists a non-trivial polynomial $p$ such that $p(A)$ is normal. In \cite[Theorem 3.3]{Cvet} the authors proved that if $A\in \mathcal{B}(\mathcal{H}) $ is polynomially normal such that $p(A)$  has closed range, where $p$ is non-trivial polynomial defined as follow $p(z)= \Sigma_{i=1}^{n}a_{i}z^{i},$ and $a_{1} \neq 0$, then $A$ is group invertible. Another class of operators $A\in \mathcal{B}(\mathcal{H})$, which are group invertible is the class of algebraic operators $p(A)=0,$ where $p(z)= \Sigma_{i=1}^{n}a_{i}z^{i}$ and $a_{1} \neq 0$.

\section{The operator equation $AX-YB = C$ }
In this section we give some equivalent conditions to the solvability of the operator equation $AX-YB=C$, the most important one is that the matrices of operators $M$ and $ D $ are pseudo equivalent via $ P$ and $ Q$ and the operator $ U=DQPDD^{\sharp}+I-DD^{\sharp} $ is invertible.
  
\begin{theorem}\label{theo1}
Let $A \in \mathcal{B}( \mathcal{H})$, $B \in \mathcal{B}( \mathcal{K})$ and $C \in \mathcal{B}( \mathcal{K} , \mathcal{H})$ such that $A$ and $B$ are group invertible. 
Then the following conditions are equivalent.
\begin{enumerate}
\item $M$ is group invertible,
\item $A^{\pi}CB^{\pi}= 0,$
\item There exist $X,Y \in \mathcal{B}(\mathcal{K},\mathcal{H})$ solutions of the equation $ AX-YB=C,$
\item $M$ and $ D $ are pseudo equivalent via $ P$ and $ Q$ and the operator $ U=DQPDD^{\sharp}+I-DD^{\sharp} $ is invertible.
\end{enumerate} 
In this case the general solutions of $ AX-YB=C,$ are given by
\begin{equation}\label{eq4.1}
\left \{
\begin{array}{rcl}
X&=& A^{\sharp}C + A^{\sharp}ZB + A^{\pi}Z_{1} \\ Y&=&-A^{\pi}CB^{\sharp}+Z
+AA^{\sharp}ZBB^{\sharp}-ZBB^{\sharp}
\end{array}
\right.
\end{equation}
where $Z_{1},~Z\in \mathcal{B}(\mathcal{K},\mathcal{H})$ are arbitrary.
\end{theorem}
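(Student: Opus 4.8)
The plan is to route every implication through condition $(2)$, the pointwise relation $A^{\pi}CB^{\pi}=0$, which is the most rigid and easiest to manipulate. The anchor is the equivalence $(1)\Leftrightarrow(2)$: since $A$ and $B$ are group invertible, $M$ is group invertible exactly when $A^{\pi}CB^{\pi}=0$, which is precisely \cite[Theorem 2.5]{Den6} (the same citation already used inside the proof of Theorem \ref{th3}). With this identification in hand, the whole statement reduces to tying the solvability assertion $(3)$ and the pseudo-equivalence assertion $(4)$ back to $(2)$.

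For $(3)\Rightarrow(2)$ I would compress the equation by the spectral idempotents: multiplying $AX-YB=C$ on the left by $A^{\pi}$ and on the right by $B^{\pi}$ and using $A^{\pi}A=0$ and $BB^{\pi}=0$ (immediate from $AA^{\sharp}A=A$ and $BB^{\sharp}B=B$) annihilates both terms on the left and forces $A^{\pi}CB^{\pi}=0$. For the converse $(2)\Rightarrow(3)$ I would exhibit the explicit particular solution $X_{0}=A^{\sharp}C$ and $Y_{0}=-A^{\pi}CB^{\sharp}$; using $B^{\sharp}B=BB^{\sharp}$, the splitting $C=AA^{\sharp}C+A^{\pi}C$, and the identity $A^{\pi}C=A^{\pi}CBB^{\sharp}$ (which is exactly $(2)$), a one-line computation gives $AX_{0}-Y_{0}B=AA^{\sharp}C+A^{\pi}CBB^{\sharp}=C$. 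Note this pair is the $Z=0,\ Z_{1}=0$ instance of (\ref{eq4.1}), so it simultaneously seeds the general-solution formula.

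The delicate part, and where I expect the main obstacle, is the equivalence with $(4)$. Mirroring the Roth-type construction of Theorem \ref{th3} but with two operators rather than one, I would, from a solution $(X,Y)$, form the regular block operators $P=\begin{bmatrix}AA^{\sharp}&-Y\\0&BB^{\sharp}\end{bmatrix}$ and $Q=\begin{bmatrix}AA^{\sharp}&X\\0&BB^{\sharp}\end{bmatrix}$, for which a direct multiplication gives $M=PDQ$ (the $(1,2)$ block collapsing to $AX-YB=C$). One then selects inner inverses $P^{-},Q^{-}$ exactly as in Theorem \ref{th3} via \cite[Theorem 2]{Cara}, checks the reverse relation $D=P^{-}MQ^{-}$ so that $M$ and $D$ are pseudo-equivalent, and computes $U=DQPDD^{\sharp}+I-DD^{\sharp}$ to confirm it is invertible (here $U$ is the identity off $\mathcal{R}(D)$ and equals the compression of $DQP$ to $\mathcal{R}(D)$, so invertibility of $U$ is the statement that this compression is boundedly invertible). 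The genuinely hard direction is $(4)\Rightarrow(1)$: starting from an \emph{arbitrary} pseudo-equivalence $M=PDQ$, $D=P^{-}MQ^{-}$ together with invertibility of $U$, I must deduce $A^{\pi}CB^{\pi}=0$. The strategy is to use Proposition \ref{Prop}, which controls $P^{-}P$ and $QQ^{-}$ only on the ranges of the operators, and then invoke the invertibility of $U$ to promote those range-restricted identities into an honest relation among the $(1,2)$ blocks; taming the arbitrary inner-inverse indeterminacy with $U$ is the technical crux of the whole theorem.

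Finally, for the general solution I would write an arbitrary solution as the particular solution $(A^{\sharp}C,-A^{\pi}CB^{\sharp})$ plus a solution $(\widetilde{X},\widetilde{Y})$ of the homogeneous equation $A\widetilde{X}=\widetilde{Y}B$. Left-multiplying by $A^{\pi}$ yields $A^{\pi}\widetilde{Y}B=0$, hence $A^{\pi}\widetilde{Y}BB^{\sharp}=0$; left-multiplying by $A^{\sharp}$ yields $AA^{\sharp}\widetilde{X}=A^{\sharp}\widetilde{Y}B$. Choosing $Z=\widetilde{Y}$ and $Z_{1}=\widetilde{X}$ and substituting into the homogeneous part of (\ref{eq4.1}), the first identity gives $A^{\sharp}ZB+A^{\pi}Z_{1}=AA^{\sharp}\widetilde{X}+A^{\pi}\widetilde{X}=\widetilde{X}$, while the relation $A^{\pi}\widetilde{Y}BB^{\sharp}=0$ gives $Z+AA^{\sharp}ZBB^{\sharp}-ZBB^{\sharp}=\widetilde{Y}$. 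Thus every homogeneous solution is captured, and adding the particular solution produces exactly the parametrization (\ref{eq4.1}).
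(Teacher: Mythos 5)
Your treatment of $(1)\Leftrightarrow(2)\Leftrightarrow(3)$ coincides with the paper's, and your argument for the generality of the solution formula (decomposing an arbitrary solution as the particular one plus a homogeneous solution and recovering the latter via $Z=\widetilde{Y}$, $Z_{1}=\widetilde{X}$) is correct and in fact more complete than the paper's own verification, which only checks that \eqref{eq4.1} always solves the equation and that one choice of parameters returns $(X_{0},Y_{0})$. The genuine gap is $(4)\Rightarrow(1)$: you explicitly leave it as "the technical crux" with only a vague plan of promoting the range-restricted identities of Proposition \ref{Prop} to a relation among $(1,2)$ blocks, and you aim at the wrong target, namely $(4)\Rightarrow(2)$. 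The paper instead goes $(4)\Rightarrow(1)$ and closes the step in one line: Proposition \ref{Prop} gives $P^{-}PD=D$ and $DQQ^{-}=D$, and the invertibility of $U=DQPDD^{\sharp}+I-DD^{\sharp}$ is precisely the hypothesis of \cite[Lemma 3.1]{Den6} under which the product $M=PDQ$ is group invertible; the already-proved equivalence $(1)\Leftrightarrow(2)$ then does the rest. Without identifying that lemma (or proving a substitute), your proposal does not establish this implication.

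There is also a fixable flaw in $(3)\Rightarrow(4)$. With an \emph{arbitrary} solution pair $(X,Y)$, your $P=\begin{bmatrix}AA^{\sharp}&-Y\\0&BB^{\sharp}\end{bmatrix}$ and $Q=\begin{bmatrix}AA^{\sharp}&X\\0&BB^{\sharp}\end{bmatrix}$ do satisfy $PDQ=M$, but they need not be regular: the criterion \cite[Theorem 2.5]{Den6} requires $A^{\pi}YB^{\pi}=0$ and $A^{\pi}XB^{\pi}=0$, which arbitrary solutions need not satisfy (take $A=B=C=0$ and $Y$ with non-closed range). The paper sidesteps this by building $P$ and $Q$ from the particular solution, $P=\begin{bmatrix}AA^{\sharp}&A^{\pi}CB^{\sharp}\\0&BB^{\sharp}\end{bmatrix}$ and $Q=\begin{bmatrix}AA^{\sharp}&A^{\sharp}C\\0&BB^{\sharp}\end{bmatrix}$, whose corner entries are annihilated appropriately, and then computes $U=\begin{bmatrix}A+A^{\pi}&AA^{\sharp}CBB^{\sharp}\\0&B+B^{\pi}\end{bmatrix}$ explicitly, deducing invertibility from that of $A+A^{\pi}$ and $B+B^{\pi}$ via \cite{RoSil} and \cite{HanLee}; your "compression to $\mathcal{R}(D)$" remark asserts rather than proves the invertibility of $U$. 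As written, the proposal therefore establishes $(1)\Leftrightarrow(2)\Leftrightarrow(3)$ and the solution formula, but not the equivalence with $(4)$.
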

 \begin{proof}
 $(1)\Leftrightarrow (2)$ Since $ A $ and $ B $ are group invertible it follows from \cite[Theorem 2.5]{Den6}, that (1) and (2) are equivalent. 
 \\$(2)\Leftrightarrow(3)$ 
Since $A^{\pi}CB^{\pi}=0 ,$ then we get 
$$ A(A^{\sharp}C)- (AA^{\sharp}CB^{\sharp}-CB^{\sharp})B=C,$$ which implies that
 $ AX_{0}-Y_{0}B= C,$ where 
\begin{equation*}
\left \{
\begin{array}{rcl}
X_{0}&=&A^{\sharp}C  \\ Y_{0}&=&(AA^{\sharp}-I)CB^{\sharp} 
\end{array}
\right.
\end{equation*}
are particular solutions of the equation $AX-YB=C$.
\\
Since there exist $X,Y \in \mathcal{B}(\mathcal{K},\mathcal{H})$ solutions to the equation $ AX-YB=C,$ then $ A^{\pi}\left[ AX-YB\right]B^{\pi} =A^{\pi}CB^{\pi},$ this implies that 
$A^{\pi}CB^{\pi}=0.$ Thus $(2)\Leftrightarrow(3).$
\\$(3)\Rightarrow (4)$
Suppose that $X$ and $Y$ are solutions to the equation $AX-YB=C$, which is equivalent to $A^{\pi}CB^{\pi}=0,$ then
there exists $P=\begin{bmatrix}AA^{\sharp} & A^{\pi}CB^{\sharp}\\ 
0 & BB^{\sharp}
\end{bmatrix}$ and $Q=\begin{bmatrix}AA^{\sharp} & A^{\sharp}C\\ 
0 & BB^{\sharp}
\end{bmatrix}.$
Since $AA^{\sharp}$, $BB^{\sharp}$ are idempotent and $ A^{\pi}CB^{\sharp}B^{\pi}=0,$ 
$ A^{\pi}A^{\sharp}CB^{\pi}=0,$ it follows from \cite[Theorem 2.5]{Den6} that $P$ and $Q$ are group invertible then they are regular, the inner inverses are given by
\begin{center}
$P^{-}=\begin{bmatrix}AA^{\sharp} & 0\\ 
0 & BB^{\sharp}
\end{bmatrix}$ and  $Q^{-}=\begin{bmatrix}AA^{\sharp} & -A^{\sharp}CBB^{\sharp}\\ 
0 & BB^{\sharp}\end{bmatrix}.$
\end{center}
We have 
 \begin{equation*} 
\begin{split}
PDQ& =\begin{bmatrix}AA^{\sharp} & A^{\pi}CB^{\sharp}\\ 
0 & BB^{\sharp}
\end{bmatrix}\begin{bmatrix}A & 0\\ 
0 & B
\end{bmatrix}\begin{bmatrix}AA^{\sharp} & A^{\sharp}C\\ 
0 & BB^{\sharp}
\end{bmatrix} \\ 
&= \begin{bmatrix}A &-CBB^{\sharp}+ AA^{\sharp}CBB^{\sharp}\\ 
0 & B
\end{bmatrix}\begin{bmatrix}AA^{\sharp} & A^{\sharp}C\\ 
0 & BB^{\sharp}
\end{bmatrix}\\
&= \begin{bmatrix}A &AA^{\sharp}C+ CBB^{\sharp}- AA^{\sharp}CBB^{\sharp}\\ 
0 & B
\end{bmatrix}\\
&= \begin{bmatrix}A &C\\ 
0 & B
\end{bmatrix}\\
&=M.  
\end{split}
\end{equation*}
 and 
 \begin{equation*} 
\begin{split}
P^{-}MQ^{-}& = \begin{bmatrix}AA^{\sharp} & 0\\ 0 & BB^{\sharp}
\end{bmatrix}\begin{bmatrix}A & C\\ 
0 & B
\end{bmatrix}\begin{bmatrix}AA^{\sharp} & -A^{\sharp}CBB^{\sharp}\\ 
0 & BB^{\sharp}
\end{bmatrix}\\
&=\begin{bmatrix}A & AA^{\sharp}C\\ 
0 & B
\end{bmatrix}\begin{bmatrix}AA^{\sharp} & -A^{\sharp}CBB^{\sharp}\\ 
0 & BB^{\sharp}
\end{bmatrix}\\
&= \begin{bmatrix}A & -AA^{\sharp}CBB^{\sharp}+AA^{\sharp}CBB^{\sharp}\\ 
0 & B
\end{bmatrix}\\
&=\begin{bmatrix}A &0 \\ 
0 & B
\end{bmatrix}\\
& =D.
\end{split}
\end{equation*}
Then $M$ and $D$ are  pseudo equivalent via $ P$ and $ Q$. In addition we have 
\begin{equation*} 
\begin{split}
U& = DQPDD^{\sharp}+I-DD^{\sharp}\\
&=\begin{bmatrix}A+A^{\pi} & AA^{\sharp}CBB^{\sharp}\\ 
0 & B+B^{\pi}
\end{bmatrix}.
\end{split}
\end{equation*}

Since $ A $ and $ B $ are group invertible, $A^{\pi}$ and $B^{\pi}$ are projections, then from \cite[Lemma 3]{RoSil}, $ A+A^{\pi}$ and $ B+B^{\pi}$ are invertible. Hence from \cite[Lemma 1]{HanLee}, $ U $ is invertible.
\\$(4)\Rightarrow(1)$ Suppose that $M$ and $ D $ are pseudo equivalent via $ P$ and $ Q$, it follows from Proposition \ref{Prop} that $ P^{-}PD=D$ and $ DQQ^{-}=D,$ where $P^{-}, Q^{-}$ are the inner inverses of $P$ and $Q$ respectively. Since $ U $ is invertible, then from \cite[Lemma 3.1]{Den6}, $ M $ is group invertible.

If one of the conditions (1)--(4) holds, substituting (\ref{eq4.1}) into $AX-YB$ gives
\\$A[A^{\sharp}C + A^{\sharp}ZB + A^{\pi}Z_{1}]-[(AA^{\sharp}-I)CB^{\sharp} 
+Z+AA^{\sharp}ZBB^{\sharp}-ZBB^{\sharp}]B$
\begin{equation*} 
\begin{split}
& =  AA^{\sharp}C + AA^{\sharp}ZB -(AA^{\sharp}-I)CB^{\sharp}B-AA^{\sharp}ZB \\
& = AA^{\sharp}C + CB^{\sharp}B -AA^{\sharp}CB^{\sharp}B \\
& = C. 
\end{split}
\end{equation*}
Hence (\ref{eq4.1}) satisfies the equation $AX-YB=C,$ for any choice of $Z_{1}$ and $Z$.
Fixing $Z_{1}=X_{0}$ and $Z= Y_{0}$  such that :
$$X_{0}= A^{\sharp}C ,~~ Y_{0}= (AA^{\sharp}-I)CB^{\sharp}. $$
Then we have 
\begin{equation*} 
\begin{split}
X &= A^{\sharp}C + A^{\sharp}ZB + A^{\pi}Z_{1} \\
  &= A^{\sharp}C + A^{\sharp}[(AA^{\sharp}-I)CB^{\sharp}]B + A^{\pi}[A^{\sharp}C]\\
  &= A^{\sharp}C\\
  &= X_{0}, 
\end{split}
\end{equation*}
and
\begin{equation*} 
\begin{split}
Y & = (AA^{\sharp}-I)CB^{\sharp} +Z+AA^{\sharp}ZBB^{\sharp}-ZBB^{\sharp}\\
  & = (AA^{\sharp}-I)CB^{\sharp} + (AA^{\sharp}-I)CB^{\sharp}+AA^{\sharp}[(AA^{\sharp}-I)CB^{\sharp}]BB^{\sharp}-(AA^{\sharp}-I)CB^{\sharp}BB^{\sharp} \\
  & = (AA^{\sharp}-I)CB^{\sharp} + (AA^{\sharp}-I)CB^{\sharp}-(AA^{\sharp}-I)CB^{\sharp}\\
  & = (AA^{\sharp}-I)CB^{\sharp}\\
  & = Y_{0}.
\end{split}
\end{equation*}
\\ This shows that (\ref{eq4.1}) are general solutions of the equation $AX-YB=C$ .
%\end{center}
\end{proof}
\begin{example}
Consider the matrix equation 
\begin{equation*}
AX-YB=C,
\end{equation*}
\begin{center}
such that  
$A=\begin{bmatrix}-1&0&1&2\\-1&1&0&-1\\0&-1&1&3\\1&1&-2&-5 
\end{bmatrix}$, $B=\dfrac{1}{4}\begin{bmatrix} 4&-2&-2&0\\-2&4&-2&0\\-2&-1&4&-1\\-1&-1&-1&3
\end{bmatrix}$, and
$C=\begin{bmatrix} 3&1&1&-2\\0&0&0&0\\2&0&0&1\\-6&1&1&-2
\end{bmatrix}.$ 
\end{center}
According to \cite{PR}  we have $A$ is group invertible where \\
\centerline{
$A^{\sharp}= \begin{bmatrix}
-5&4&1&-2\\-21&17&4&-9\\16&-13&-3&7\\-11&9&2&-5
\end{bmatrix}$ and $AA^{\sharp}= \begin{bmatrix}
-1&1&0&-1\\-5&4&1&-2\\4&-3&-1&1\\-3&2&1&0
\end{bmatrix} $.}
Similarly we have $B$ is group invertible where \\
\centerline{$B^{\sharp}= \frac{2}{1083}\begin{bmatrix}
265&-61&-96&-108\\-96&300&-96&-108\\-115&-137&246&6\\-210&-156&-210&576
\end{bmatrix}$ and $BB^{\sharp}= \frac{1}{19}\begin{bmatrix}
13&-5&-6&-2\\-6&14&-6&-2\\-6&-5&13&-2\\-6&-5&-6&17
\end{bmatrix}.$} 
We obtain $A^{\pi}CB^{\pi}=0$,
then according to Theorem \ref{theo1}, the equation $AX-YB=C$ have a solutions.
\end{example}
%--------------------------------------
\begin{example}
Consider the operators equation $AX-YB=C$, where $ A, $ $ B $ and $ C $ are defined on the Hilbert space $ \mathcal{H}\oplus\mathcal{H}$ by $A=\begin{bmatrix}I & 0\\ 
0 & 0
\end{bmatrix}  $, $B= \begin{bmatrix}I & 0\\ 
0 & 0
\end{bmatrix} $ and $C= \begin{bmatrix}0 & I\\ 
-I & 0
\end{bmatrix}.$ Then we have $A^{\pi}CB^{\pi}=0$, consequently the equation $AX-YB=C$ have a solutions. 
\end{example}

Now we apply Theorem \ref{theo1} to give new necessary and sufficient conditions for the existence of solution to the Stein equation $AYB-Y=C$.
\begin{corollary}
Let $A \in \mathcal{B}( \mathcal{H})$, $B \in \mathcal{B}( \mathcal{K})$ and $C \in \mathcal{B}( \mathcal{K} , \mathcal{H}),$ such that $A+I$ and $B+I$ are group invertible, then\\
The equation $AYB-Y=C$ has a solution if and only if $M_{1}=\begin{bmatrix}A+I & C\\ 
0&B+I
\end{bmatrix}$ and $ D_{1}=\begin{bmatrix}A+I & 0\\ 
0 & B+I
\end{bmatrix} $ are pseudo equivalent via $ P$ and $ Q$ and the operator $ U_{1}=D_{1}QPD_{1}D_{1}^{\sharp}+I-D_{1}D_{1}^{\sharp} $ is invertible.
\end{corollary}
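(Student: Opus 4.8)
The plan is to reduce the Stein equation $AYB-Y=C$ to the two–variable Sylvester equation governed by Theorem \ref{theo1}, applied with $A+I$ in place of $A$ and $B+I$ in place of $B$ (both group invertible by hypothesis, so that Theorem \ref{theo1} is available with $M,D,U$ replaced by $M_{1},D_{1},U_{1}$). The organizing identity is
$$(A+I)(YB)-Y(B+I)=AYB+YB-YB-Y=AYB-Y,$$
so that any $Y$ solving $AYB-Y=C$ yields the pair $(X,Y)=(YB,Y)$ solving $(A+I)X-Y(B+I)=C$. Once the solvability equivalence ``$AYB-Y=C$ is solvable $\iff$ $(A+I)X-Y(B+I)=C$ is solvable'' is established, the corollary is immediate: Theorem \ref{theo1}, for the data $A+I,\,B+I,\,C$, states that $(A+I)X-Y(B+I)=C$ is solvable precisely when $M_{1}$ and $D_{1}$ are pseudo-equivalent via some $P,Q$ and $U_{1}=D_{1}QPD_{1}D_{1}^{\sharp}+I-D_{1}D_{1}^{\sharp}$ is invertible (this is the equivalence of conditions (3) and (4) there).

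For the forward implication I would first record the two identities $(A+I)^{\pi}A=-(A+I)^{\pi}$ and $B(B+I)^{\pi}=-(B+I)^{\pi}$, which follow from $(A+I)^{\pi}(A+I)=0$ and $(B+I)(B+I)^{\pi}=0$. Conjugating $C=AYB-Y$ on the left by $(A+I)^{\pi}$ and on the right by $(B+I)^{\pi}$ and substituting these identities collapses the right-hand side to zero, giving the compatibility condition $(A+I)^{\pi}C(B+I)^{\pi}=0$. This is exactly Theorem \ref{theo1}(2) for $A+I,\,B+I$, hence condition (4) for $M_{1},D_{1}$ follows; equivalently, one may simply invoke the pair $(X,Y)=(YB,Y)$ together with Theorem \ref{theo1}. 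Either route disposes of the forward direction cleanly.

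For the reverse implication I would begin from condition (4) for $M_{1},D_{1}$, invoke Theorem \ref{theo1} to write the general solution of $(A+I)X-Y(B+I)=C$ in the parametrized form (\ref{eq4.1}) (with $A,B$ replaced by $A+I,B+I$), and then attempt to choose the free parameters $Z,Z_{1}$ so that the constraint $X=YB$ holds; the associated $Y$ is then a Stein solution. \emph{The main obstacle is precisely this step.} The two–variable equation carries strictly more freedom than the one–variable Stein equation, so not every Sylvester solution descends to a Stein solution, and one must verify that the parameter freedom in (\ref{eq4.1}) is rich enough to enforce $X=YB$. Concretely, imposing $X=YB$ and projecting with $(A+I)(A+I)^{\sharp}$ and $(B+I)(B+I)^{\sharp}$ produces an auxiliary consistency equation in $Z,Z_{1}$; showing that this auxiliary equation is solvable under the standing hypothesis $(A+I)^{\pi}C(B+I)^{\pi}=0$ is the delicate computational core, and is where I would concentrate the effort and scrutinize whether the group-invertibility of $A+I$ and $B+I$ (as opposed to genuine invertibility) suffices.
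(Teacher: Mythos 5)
Your forward direction is correct and matches the paper's: from a Stein solution $Y$ the pair $(X,Y)=(YB,Y)$ solves $(A+I)X-Y(B+I)=C$, and Theorem \ref{theo1} then yields condition (4) for $M_{1},D_{1},U_{1}$. The gap you flag in the reverse direction is real, and in fact it cannot be closed: no choice of the parameters $Z,Z_{1}$ in (\ref{eq4.1}) will rescue it, because the ``if'' half of the corollary is false as stated. Take $\mathcal{H}=\mathcal{K}$, $A=B=I$ and any $C\neq 0$. Then $A+I=B+I=2I$ are invertible, the two-variable equation $(A+I)X-Y(B+I)=2X-2Y=C$ is solvable, so condition (4) of Theorem \ref{theo1} holds for $M_{1},D_{1}$; explicitly $P=I$ and $Q=\begin{bmatrix} I & C/2 \\ 0 & I\end{bmatrix}$ give $PD_{1}Q=M_{1}$, $P^{-}M_{1}Q^{-}=D_{1}$, and $U_{1}=2Q$ is invertible. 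Yet $AYB-Y=Y-Y=0\neq C$ has no solution. The defect is exactly the one you identified: the two-variable Sylvester equation has strictly more solutions than the Stein equation, and the constraint $X=YB$ is extra information that the pseudo-equivalence criterion does not see.

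The paper's own proof stumbles at precisely this point. It rewrites $(A+I)X-Y(B+I)=C$ as $I\mathcal{X}-\mathcal{Y}I=C$ with $\mathcal{X}=AX-Y$ and $\mathcal{Y}=YB-X$, and then asserts that the solutions of this equation ``are of the form $\mathcal{X}=I^{-1}C$ and $\mathcal{Y}=0$.'' That is not what Theorem \ref{theo1} gives: with $A=B=I$, formula (\ref{eq4.1}) produces the general solution $\mathcal{X}=C+Z$, $\mathcal{Y}=Z$ with $Z$ arbitrary, and for a given solution $(X,Y)$ of the two-variable equation there is no reason the associated $\mathcal{Y}=YB-X$ should vanish. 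So your instinct to concentrate the effort on whether a Sylvester solution descends to a Stein solution was the right one; the honest outcome of that effort is that only the ``only if'' direction is provable, and the ``if'' direction requires an additional hypothesis or a corrected criterion.
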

\begin{proof}
The equation $AYB-Y=C$ is equivalent to  
\begin{equation}\label{equa1} 
\left \{
\begin{array}{rcl}
X &=& YB \\ AX-Y&=& C
\end{array}
\right. 
\end{equation}
(\ref{equa1}) implies that 
\begin{equation}\label{equa2} 
(A+I)X-Y(B+I)= C.
\end{equation}

We prove now that (\ref{equa2}) implies (\ref{equa1}). The equation (\ref{equa2}) is equivalent to 
\begin{equation}\label{equa3} 
I \mathcal{X}-\mathcal{Y} I=C,
\end{equation}

 where $ \mathcal{X}=AX-Y $, $ \mathcal{Y}=YB-X $ and $ I $ is the identity operator, it follows from Theorem \ref{theo1}, that the equation (\ref{equa3}) is solvable and the solutions are of the following form, $ \mathcal{X}= I^{-1}C $ and $ \mathcal{Y}=0. $ Hence $ AX-Y=C $ and $ X=YB. $ Consequently (\ref{equa1}) is equivalent to (\ref{equa2}).
Since $A+I$ and $B+I$ are group invertible, it follows from Theorem \ref{theo1}, that the equation (\ref{equa2}) has a solution if and only if  $M_{1}$ and $ D_{1}$ are pseudo equivalent via $ P$ and $ Q$ and the operator $ U_{1}=D_{1}QPD_{1}D_{1}^{\sharp}+I-D_{1}D_{1}^{\sharp} $ is invertible.
\end{proof}

% ------------------------------------------------------------------------

 \subsection*{Acknowledgment}
 The third author was partially supported by Universidad Nacional de La Pampa, Facultad de Ingenier\'{\i}a [grant resol. nro. 135/19], Universidad Nacional del Sur [grant PGI 24/ZL22], and Ministerio de Econom\'{\i}a, Industria y Competitividad (Spain) [grant red de excelencia RED2022- 134176-T].

\subsection*{Declarations}
The authors have no competing interests to declare that are relevant to the content of this article.

\end{document}